\newtheorem{thm}{Theorem}[section]
\newtheorem{cor}{Corollary}[section]
\newtheorem{lem}{Lemma}[section]
\newtheorem{prop}{Proposition}[section]
\newtheorem{defn}{Definition}[section]
\newtheorem{rem}{Remark}[section]
\newcommand{\eh}{\hfill}\newlength{\sperr}
\newenvironment{proof}{{\settowidth{\sperr}{\bf\rm
Proof}%
\par\addvspace{0.3cm}\noindent\parbox[t]{1.3\sperr}
{\textit{ P\eh r\eh o\eh o\eh f\eh .}}%
}}{\nopagebreak\mbox{}
$\Box$\par\addvspace{0.3cm}}
\numberwithin{equation}{section}
\def\s{\sigma}
\title{Effective construction  of a class of positive operators
in Hilbert
space, which
 do not admit  triangular factorization }
\author{Lev Sakhnovich}
\date{}
\begin{document}
\maketitle

\thanks{99 Cove  ave., Milford, CT, 06461, USA\\
lsakhnovich@gmail.com}

 \begin{abstract} A class  of
 non-factorable positive operators is constructed. As a result, pure existence
 theorems in the well-known problems by Ringrose, Kadison and Singer
are substituted by concrete examples.
\end{abstract}

\textbf{Mathematics Subject Classification (2010).} Primary 47A68; Secondary 47A05,
 47A66.

 \textbf{Keywords.} Triangular operators, nest algebra, multiplicity 1, hyperintransitive
 operator.

\section{Introduction}\label{sec1}
To introduce the main notions of the
triangular factorization (see  \cite{3, 5, 8, 14, 15, 20})
consider a Hilbert space $L^{2}(a,\, b)\, $
$(-\infty{\leq}a<b{\leq}\infty)$. The orthogonal projectors $P_{\xi}$ in $L^{2}(a,\, b)$
are defined by the relations
\[
\big(P_{\xi}f\big)(x)=f(x)  \, \, \mathrm{for} \, \,  a<x<\xi,  \,\,
\big(P_{\xi}f\big)(x)=0 \, \, \mathrm{for} \, \,  \xi <x<b
\, \, \big( f \in L^{2}(a,b)\big).
\]
Denote the identity operator by $I$.
\begin{defn}\label{Definition 1.1.}
A bounded operator $S_{-}$ on
 $L^{2}(a,b)$  is called lower triangular if for
every
 $\xi$ the relations \begin{equation}\label{1.1}
  S_{-}Q_{\xi}=Q_{\xi}S_{-}Q_{\xi} , \end{equation} where
  $Q_{\xi}=I-P_{\xi}$, are true. The operator  $S_{-}^{\ast}$ is called
  upper triangular.
\end{defn}

\begin{defn} \label{Definition 1.2.}
 A bounded, positive definite and invertible
operator $S$ on $L^{2}(a,b)$ is said to admit a left (right)
triangular factorization if it can be represented in the form
\begin{equation}\label{1.2}
S=S_{-}S_{-}^* \quad (S=S_{-}^*S_{-}),\end{equation} where $S_{-}$ and $S_{-}^{-1}$ are
bounded and lower triangular operators.
\end{defn}
Further, we often write factorization meaning a left triangular factorization.

In paper \cite{20} (see p. 291)  we formulated necessary and sufficient conditions
under which the positive definite operator $S$ admits  a triangular
factorization. The factorizing operator $S_{-}^{-1}$ was
constructed in the explicit form.
We proved that a wide class of  operators admits a triangular factorization \cite{20}.

 D. Larson proved  \cite{8} the \emph{existence}  of positive definite and invertible
 but non-factorable operators. In the present article we construct  \emph{concrete examples} of
 such operators. In particular, the following operator
 \begin{equation}\label{1.3}   Sf=f(x)-{\mu}\int_{0}^{\infty}\frac{\mathrm{sin}{\pi}(x-t)}{{\pi}(x-t)}f(t)dt,\quad
 f(x){\in}L^{2}(0,\infty),\quad 0<\mu<1
 \end{equation}
 is positive definite and invertible but non-factorable.
 Using  positive definite and invertible but non-factorable operators  we have managed to substitute pure  existence  theorems \cite{8} by concrete examples in the
 well-known problems posed by J.R. Ringrose \cite{13}, R.V. Kadison and I.M. Singer \cite{5}.
 We note that Kadison-Singer problem was posed independently by
 I. Gohberg and M.G. Krein \cite{4}.

 The non-factorable operator $S$, which is defined by formula \eqref{1.3}, is used in a number of theoretical and applied problems (in optics \cite{7},
random matrices \cite{23}, generalized stationary processes \cite{11,12}, and Bose gas theory \cite{10}). The results obtained in  the paper are interesting
 from this point of view too.
 \section{A special class of operators \\ and corresponding differential systems}\label{sec2}

In this section we consider operators $S$ of the form
 \begin{equation}\label{2.1}
 Sf=f(x)-{\mu}\int_{0}^{\infty}h(x-t)f(t)dt,\quad
 f(x){\in}L^{2}(0,\infty),
 \end{equation}
 where $\mu=\overline{\mu}$ and  $h(x)$ admits representation
 \begin{equation}\label{2.2}
   h(x)=\frac{1}{2\pi}\int_{-\infty}^{\infty}e^{ix\lambda}\rho(\lambda)d\lambda.
 \end{equation}
 We suppose that the function $\rho(\lambda)$  satisfies the following conditions

1. The function $\rho(\lambda)$ is real and bounded
\begin{align}\label{d0}&
|\rho(\lambda)|{\leq}U^{2},\quad U>0 \quad  (-\infty<\lambda<\infty).
\end{align}

2.
  $\rho(\lambda)=\rho(-\lambda){\in}L(-\infty,\infty)$.\\
  Hence,  the function $h(x)\quad
  (-\infty<x<\infty)$ is continuous and real. The corresponding operator
  \begin{equation}\label{2.3}
   Hf=\int_{0}^{\infty}h(x-t)f(t)dt
  \end{equation}
  is self-adjoint and bounded, where $||H||{\leq}U.$
  We introduce the operators
   \begin{equation}\label{2.4}   S_{\xi}f=f(x)-{\mu}\int_{0}^{\xi}h(x-t)f(t)dt,\quad
 f(x){\in}L^{2}(0,\xi),\quad 0<\xi<\infty.\end{equation}
 The following statement is true.

 \begin{prop}\label{Proposition 2.1.}
 {If $-1/U<\mu<1/U$, then the operator}
 $S_{\xi}$, which is defined by formula \eqref{2.4}, is positive definite, bounded
 and invertible.
 \end{prop}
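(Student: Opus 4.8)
The core observation is that $S_\xi = I - \mu H_\xi$ where $H_\xi$ is the truncation of the convolution operator $H$ to $L^2(0,\xi)$, viewed as acting on functions supported in $(0,\xi)$. The strategy is to show $\|H_\xi\| \le U$ (uniformly in $\xi$), which combined with $|\mu| < 1/U$ immediately gives $\|\mu H_\xi\| \le |\mu| U < 1$, hence $S_\xi = I - \mu H_\xi$ is positive definite, bounded, and invertible (with $S_\xi^{-1} = \sum_{k\ge 0}(\mu H_\xi)^k$ convergent in operator norm, and $S_\xi \ge (1 - |\mu|U) I > 0$).

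First I would establish the bound $\|H_\xi\| \le U$. The clean way: for $f \in L^2(0,\xi)$ extended by zero to all of $\BR$, write $h(x-t) = \frac{1}{2\pi}\int e^{i(x-t)\lambda}\rho(\lambda)\,d\lambda$ and compute
\begin{equation}\label{planeq1}
(H_\xi f, f) = \frac{1}{2\pi}\int_{-\infty}^{\infty} \rho(\lambda)\,\bigl|\widehat{f}(\lambda)\bigr|^2\,d\lambda,
\end{equation}
where $\widehat{f}(\lambda) = \int_0^\xi e^{-it\lambda} f(t)\,dt$ is the Fourier transform, justified by Fubini using $\rho \in L^1$ and $f \in L^1 \cap L^2$ on the finite interval. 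Since $|\rho(\lambda)| \le U^2$ and, by Plancherel, $\frac{1}{2\pi}\int |\widehat f(\lambda)|^2 d\lambda = \|f\|^2_{L^2(0,\xi)}$, we get $|(H_\xi f,f)| \le U^2 \|f\|^2$. Wait — this only gives $\|H_\xi\| \le U^2$, not $U$; I expect the paper actually intends the bound $\|H\| \le U^2$ (consistent with $\|H\| \le U$ being a typo, or with a normalization where $\rho$ is bounded by $U$). In any case the argument is identical: self-adjointness of $H_\xi$ (from $\rho$ real and even, making $h$ real and even, so the kernel $h(x-t)$ is real symmetric) gives $\|H_\xi\| = \sup_{\|f\|=1}|(H_\xi f,f)| \le \sup|\rho| = U^2$.

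Then I would assemble the conclusion: with $\|\mu H_\xi\| \le |\mu|\,U^2 < U^2/U = U$ — more carefully, under the hypothesis $|\mu| < 1/U$ and the operative bound, one gets $\|\mu H_\xi\| < 1$, so $(S_\xi f, f) = \|f\|^2 - \mu(H_\xi f, f) \ge (1 - |\mu|\,\|H_\xi\|)\|f\|^2 > 0$ for $f \ne 0$, giving positive definiteness; boundedness is clear since $\|S_\xi\| \le 1 + |\mu|\,\|H_\xi\|$; and invertibility follows from the Neumann series for $(I - \mu H_\xi)^{-1}$. The main subtlety — really the only non-routine point — is the rigorous justification of the Fourier-side identity \eqref{planeq1}, i.e. interchanging the order of integration; this needs the integrability hypotheses on $\rho$ and the fact that $f$, supported on a bounded interval, lies in $L^1$. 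Everything else is a standard Neumann-series/quadratic-form argument.
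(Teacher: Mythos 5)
Your argument is the standard one and matches what the paper intends: the paper gives no explicit proof of Proposition \ref{Proposition 2.1.}, but it records exactly the ingredients you use (self-adjointness of $H$ and a norm bound for $H$ in terms of $\sup|\rho|$) immediately before the statement, after which positive definiteness, boundedness and invertibility of $S_\xi=I-\mu H_\xi$ follow from the quadratic-form estimate and the Neumann series, just as you write. You are also right to flag the normalization: the Fourier/Plancherel computation gives $\Vert H_\xi\Vert\le\sup|\rho|\le U^{2}$ (and this is sharp for Wiener--Hopf truncations), so either the hypothesis should read $|\mu|<1/U^{2}$ or condition \eqref{d0} should read $|\rho(\lambda)|\le U$; in the paper's actual application in Section \ref{sec4} one has $U=1$, so the discrepancy is harmless there.
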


 Hence, we have
 \begin{equation}\label{2.5}
   S_{\xi}^{-1}f=f(x)+\int_{0}^{\xi}R_{\xi}(x,t,\mu)f(t)dt.
   \end{equation}
 The function $R_{\xi}(x,t,\mu)$ is jointly continuous in $x,t,\xi , \mu$.
 M.G. Krein (see \cite{4}, Ch. IV, Section 7) proved that
 \begin{equation}\label{2.6}
  S_{b}^{-1}=(I+V_{+})(I+V_{-}),\quad
 0<b<\infty,
 \end{equation}
 where the operators $V_{+}$ and $V_{-}$ are defined in $L^{2}(0,b)$ by the
 relations
\begin{equation}\label{2.7}
 \big(V_{+}^{\ast}f\big)(x)=\big(V_{-}f\big)(x)=\int_{0}^{x}R_{x}(x,t,\mu)f(t)dt.
\end{equation}
 The Krein's formula \eqref{2.6} is true for the Fredholm class of operators. The operator $S_{b}$ belongs to this class. The kernel of the operator
$V_{-}$ does not depend of $b$. Hence, if the operator $S$ admits the factorization,
then formula \eqref{2.6} holds for the case $b=\infty$ too, i.e.
\begin{equation}\label{2.8}   S^{-1}=(I+V_{+})(I+V_{-}).
\end{equation}

\begin{rem}\label{Remark 2.1.}
Relation \eqref{2.8} also follows  from Theorem 2.1 in the  paper \cite{20}.
\end{rem}

Let us introduce the function
\begin{equation}\label{2.9}
  q_{1}(x)=1+\int_{0}^{x}R_{x}(x,t,\mu)dt.
\end{equation}
Using the relation  $R_{x}(x,t,\mu)=R_{x}(x-t,0,\mu)$ (see \cite{4}, formula (8.12)), we obtain
\begin{equation}\label{2.10}
  q_{1}(x)=1+\int_{0}^{x}R_{x}(u,0,\mu)du.
  \end{equation}
According to the
well-known Krein's formula (\cite{4}, Ch. IV, formulas (8.3) and (8.14)) we have
\begin{equation}\label{2.11}
 q_{1}(x)=\exp\left\{\int_{0}^{x}R_{t}(t,0,\mu)dt\right\} .
 \end{equation}
Together with $q_{1}(x)$ we shall consider the function
\begin{equation}\label{2.12}
  q_{2}(x)=M(x)+\int_{0}^{x}M(t)R_{x}(x,t,\mu)dt,
\end{equation}
where
\begin{equation}\label{2.13}
  M(x)=\frac{1}{2}-{\mu}\int_{0}^{x}h(s)ds.
  \end{equation}
The functions $q_{1}(x)$ and $q_{2}(x)$ generate the $2{\times}2$ differential system
\begin{equation}\label{2.14}
  \frac{dW}{dx}=izJH(x)W,\quad W(0,z)=I_{2}.
  \end{equation}
  Here $W(x,z)$ and $H(x)$
are   $2{\times}2$ matrix functions and $J$ is a $2{\times}2$ matrix:
\begin{equation}\label{2.15}
  H(x)=\left[\begin{array}{cc}
                       q_{2}^{2}(x) & 1/2 \\
                       1/2 & q_{1}^{2}(x)
                     \end{array}\right], \quad
                     J=\left[\begin{array}{cc}
                        0 & 1 \\
                        1 & 0
                      \end{array}\right].
\end{equation}
Note that according to  \cite{19} (see formulas (53) and (56) therein) we have:
\begin{equation}\label{2.16}
  q_{1}(x)q_{2}(x)=1/2.
  \end{equation}
It is easy to see that
\begin{equation}\label{2.17}
 JH(x)=T(x)PT^{-1}(x),
 \end{equation}
where
\begin{equation}\label{2.18}
  T(x)=\left[\begin{array}{cc}
                             q_{1}(x) & -q_{1}(x) \\
                             q_{2}(x) & q_{2}(x)
                           \end{array}\right], \quad
P=\left[\begin{array}{cc}
    1 & 0 \\
    0 & 0
  \end{array}\right].
\end{equation}
Consider the matrix function
\begin{equation}\label{2.19}   V(x,z)=e^{-ixz/2}T^{-1}(x)W(x,z)T(0).\end{equation}
Due to \eqref{2.14}-\eqref{2.19} we get
\begin{equation}\label{2.20}   \frac{dV}{dx}=(iz/2)jV+\Gamma(x)V, \quad V(0)=I_{2},\end{equation}
where
\begin{equation}\label{2.21}   \Gamma(x)=\left[\begin{array}{cc}
                             0 & B(x) \\
                             B(x)& 0
                           \end{array}\right], \quad
j=\left[\begin{array}{cc}
          1 & 0 \\
          0 & -1
        \end{array}\right],
\end{equation}
\begin{equation}\label{2.22}
  B(x)=\frac{q_{1}^{\prime}(x)}{q_{1}(x)}=R_{x}(x,0,\mu).
  \end{equation}
Let us introduce the functions
\begin{equation}\label{2.23}   \Phi_{n}(x,z)=v_{1n}(x,z)+v_{2n}(x,z)\quad (n=1,2),\end{equation}
\begin{equation}\label{2.24}   \Psi_{n}(x,z)=i[v_{1n}(x,z)-v_{2n}(x,z)] \quad (n=1,2),\end{equation}
where $v_{in}(x,z)$ are elements of the matrix function $V(x,z)$.
It follows from \eqref{2.20} that
\begin{align}\label{2.25} & \frac{d\Phi_{n}}{dx}=(z/2)\Psi_{n}-B(x)\Phi_{n},\quad \Phi_{1}(0,z)=\Phi_{2},(0,z)=1, \\
& \label{2.26}   \frac{d\Psi_{n}}{dx}=-(z/2)\Phi_{n}+B(x)\Psi_{n},
\quad \Psi_{1}(0,z)=-\Psi_{2}(0,z)=i .\end{align}
Consider again the differential system \eqref{2.14} and the  solution $W(x,z)$ of this system.
The element $w_{1,2}(\xi,z)$ of the matrix function $W(\xi,z)$
can be represented in the form  (see \cite{17}, p. 54, formula \eqref{2.5})
\begin{equation}\label{2.27}   w_{1,2}(\xi,z)=iz\left(\left( I-zA\right)^{-1}1,S_{\xi}^{-1}1\right)_{\xi},\end{equation}
where the operator $A$ has the form
\begin{equation}\label{2.28}   Af=i\int_{0}^{x}f(t)dt.\end{equation}
It is well-known that
\begin{equation}\label{2.29}   (I-zA)^{-1}1=e^{izx}.\end{equation}
We can obtain a representation of $W(\xi,z)$ without using the operator $S_{\xi}^{-1}$.
Indeed, it follows from
\eqref{2.19},  \eqref{2.23}, and \eqref{2.24} that
\begin{equation}\label{2.30}   W(x,z)=(1/2)e^{ixz/2}T(x)\left[\begin{array}{cc}
                                            \Phi_{1}-i\Psi_{1}& \Phi_{2}-i\Psi_{2} \\
                                         \Phi_{1}+i\Psi_{1} & \Phi_{2}+i\Psi_{2}
                                          \end{array}\right]T^{-1}(0).\end{equation}
According to equality \eqref{2.10} we have $q_{1}(0)=1$. Due to \eqref{2.18} we infer
\begin{equation}\label{2.31}   T(0)=\left[\begin{array}{cc}
                             1& -1 \\
                             1/2 & 1/2
                           \end{array}\right],\quad T^{-1}(0)=\left[\begin{array}{cc}
                             1/2& 1 \\
                             -1/2 & 1
                           \end{array}\right].\end{equation}
Further we plan to use a Krein's result from \cite{6}. For that purpose we introduce the functions
\begin{equation}\label{2.32}
P(x,z)=e^{ixz/2}[\Phi(x,z)-i\Psi(x,z)]/2,
\end{equation}
\begin{equation}\label{2.33}
P_{\ast}(x,z)=e^{ixz/2}[\Phi(x,z)+i\Psi(x,z)]/2,
\end{equation}
where
\begin{equation}\label{2.34}
\Phi(x,z)=\Phi_{1}(x,z)+\Phi_{2}(x,z),\quad \Psi(x,z)=\Psi_{1}(x,z)+\Psi_{2}(x,z).
\end{equation}
Using \eqref{2.25}, \eqref{2.26} and  \eqref{2.32},   \eqref{2.33} we see that the pair $P(x,z)$ and $P_{\ast}(x,z)$ is a solution
of the following Krein system
\begin{equation}\label{2.35}   \frac{dP}{dx}=izP-B(x)P_{\ast},\quad  \frac{dP_{\ast}}{dx}=-B(x)P,
\end{equation} where
\begin{equation}\label{2.36}   P(0,z)=P_{\ast}(0,z)=1.\end{equation}
It follows from \eqref{2.32} and \eqref{2.33} that
\begin{equation}\label{2.37}   P(x,z)-P_{\ast}(x,z)=-ie^{ixz/2}\Psi(x,z).\end{equation}


\section{Non-factorable positive definite operators, a sufficient condition}\label{sec3}

We assume that the following relation  is true:
\begin{equation}\label{3.1}   M(x)=(1-\mu)/2+q(x),\quad q(x){\in}L^{2}(0,\infty),\end{equation}
where the function $M(x)$ is defined by \eqref{2.13}.
Condition \eqref{3.1} can be rewritten in an equivalent form:
\begin{equation}\label{3.2}   \int_{0}^{\infty}h(x)dx=1/2,\quad \int_{x}^{\infty}h(x)dx{\in}L^{2}(0,\infty).
\end{equation}
Now, we need the relations (see \cite{16}, Ch. 1, formulas (1.37) and (1.44)):
\begin{equation}\label{3.3}   S_{\xi}1=M(x)+M(\xi-x),\quad S_{\xi}=U_{\xi}S_{\xi}U_{\xi}, \end{equation}
where $U_{\xi}f(x)=\overline{f(\xi-x)},\quad 0{\leq}x{\leq}\xi.$
  It follows from \eqref{3.1} and \eqref{3.3} that
\begin{equation}\label{3.4}   S_{\xi}1=1-\mu +q(x)+U_{\xi}q(x).\end{equation}Hence the relation
\begin{equation}\label{3.5}   S_{\xi}^{-1}1=\frac{1}{(1-\mu)}[1-r_{\xi}(x)-U_{\xi}r_{\xi}(x)]\end{equation}
is true. Here  $r_{\xi}(x)=S_{\xi}^{-1}q(x).$ Using formulas \eqref{2.27},  \eqref{3.1}, and \eqref{3.5},
we obtain the following representation of  $w_{1,2}(\xi,z)$.
\begin{lem}\label{Lemma 3.1.}
{The function $w_{1,2}(\xi,z)$ has the form}
\begin{equation}\label{3.6}   w_{1,2}(\xi,z)=e^{iz\xi}G(\xi,z)-\overline{G(\xi,\overline{z})},\end{equation}
{where}
\begin{equation}\label{3.7}    G(\xi,z)=\frac{1}{1-\mu}\left[ 1-iz\int_{0}^{\xi}e^{-izx}r_{\xi}(x)dx\right].\end{equation}
\end{lem}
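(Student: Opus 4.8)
The plan is to start from the integral representation \eqref{2.27},
\[
w_{1,2}(\xi,z)=iz\bigl((I-zA)^{-1}1,\,S_{\xi}^{-1}1\bigr)_{\xi},
\]
and simply substitute the two explicit ingredients we already have: the resolvent identity \eqref{2.29}, which gives $(I-zA)^{-1}1=e^{izx}$, and the formula \eqref{3.5} for $S_{\xi}^{-1}1$ in terms of the function $r_{\xi}(x)=S_{\xi}^{-1}q(x)$. Thus
\[
w_{1,2}(\xi,z)=\frac{iz}{1-\mu}\int_{0}^{\xi}e^{izx}\bigl[1-r_{\xi}(x)-(U_{\xi}r_{\xi})(x)\bigr]\,dx,
\]
recalling that the inner product $(\cdot,\cdot)_\xi$ on $L^2(0,\xi)$ is conjugate-linear in the second slot and that all functions in sight are real-valued on $(0,\xi)$ for real $\mu$, so no conjugation of $r_\xi$ is needed while $z$ is kept general.

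Next I would split the integral into three pieces. The term $\frac{iz}{1-\mu}\int_0^\xi e^{izx}dx=\frac{1}{1-\mu}(e^{iz\xi}-1)$ contributes the ``leading'' parts $\frac{e^{iz\xi}}{1-\mu}$ and $-\frac{1}{1-\mu}$, which are exactly the $z$-independent pieces of $e^{iz\xi}G(\xi,z)$ and of $-\overline{G(\xi,\bar z)}$ respectively. The term $-\frac{iz}{1-\mu}\int_0^\xi e^{izx}r_\xi(x)\,dx$ I would leave as is: combined with the $-\frac{1}{1-\mu}$ above it does not match $-\overline{G(\xi,\bar z)}$, so the real work is the third term. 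For that one I change variables $x\mapsto \xi-x$ and use $(U_\xi r_\xi)(x)=r_\xi(\xi-x)$ (here all functions are real so the bar in the definition of $U_\xi$ is inert), obtaining
\[
-\frac{iz}{1-\mu}\int_{0}^{\xi}e^{izx}r_\xi(\xi-x)\,dx
=-\frac{iz\,e^{iz\xi}}{1-\mu}\int_{0}^{\xi}e^{-izx}r_\xi(x)\,dx,
\]
which is precisely the non-constant part of $e^{iz\xi}G(\xi,z)$ with $G$ as in \eqref{3.7}. Assembling the three contributions yields $w_{1,2}(\xi,z)=e^{iz\xi}G(\xi,z)-\overline{G(\xi,\bar z)}$ once one checks that $\overline{G(\xi,\bar z)}=\frac{1}{1-\mu}\bigl[1+iz\int_0^\xi e^{izx}r_\xi(x)\,dx\bigr]$, which follows from the reality of $r_\xi$ and $\mu$ by conjugating \eqref{3.7} and replacing $z$ by $\bar z$.

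The only genuine subtlety — and the step I would be most careful about — is the bookkeeping of complex conjugation: the inner product in \eqref{2.27} conjugates its second argument, while \eqref{3.5} expresses $S_\xi^{-1}1$ as a real function of $x$, so one must be sure that the conjugation acts only on the explicit $z$-dependence and not on $r_\xi$, and that the two ``constant'' terms $\pm\frac{1}{1-\mu}$ get correctly allocated to $e^{iz\xi}G$ versus $-\overline{G(\xi,\bar z)}$. Everything else is the change of variables $x\mapsto\xi-x$ together with the already-established formulas \eqref{2.29} and \eqref{3.5}; no new analytic input is required.
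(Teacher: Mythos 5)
Your proposal is correct and is exactly the argument the paper intends: the lemma is stated as a direct consequence of \eqref{2.27}, \eqref{2.29}, and \eqref{3.5}, and your substitution, splitting of the integral, and change of variables $x\mapsto\xi-x$ (using the reality of $r_\xi$ and $\mu$ to handle the conjugations) carry that out faithfully.
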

Note that the operator $S$ is positive definite, bounded and invertible.
 According to \eqref{2.6} we have
\begin{equation}\label{3.8}   Q(x)=(I+V_{-})q(x){\in}L^{2}(0,\infty).\end{equation}
Hence, there exists a sequence $x_{n}$ such that
\begin{equation}\label{3.9}   Q(x_{n}){\to}0,\quad x_{n}{\to}\infty.\end{equation}
Now, we prove the following statement.
\begin{lem}\label{Lemma 3.2.}
{Let relation \eqref{3.9} be true. Then we have}
\begin{equation}\label{3.10}
  \lim_{x_{n}{\to}\infty}{q_{1}(x_{n})}=\frac{1}{\sqrt{1-\mu}} .
\end{equation}
\end{lem}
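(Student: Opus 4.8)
The plan is to reduce \eqref{3.10} to a purely algebraic identity linking $q_1$, $q_2$ and the function $Q$ of \eqref{3.8}, and then pass to the limit along the sequence $x_n$. First I would feed the decomposition \eqref{3.1} of $M$ into the definition \eqref{2.12} of $q_2$. Writing $M(x)=\tfrac{1-\mu}{2}+q(x)$ and $M(t)=\tfrac{1-\mu}{2}+q(t)$, splitting the integral, and recognizing $1+\int_0^x R_x(x,t,\mu)\,dt=q_1(x)$ by \eqref{2.9} together with $q(x)+\int_0^x R_x(x,t,\mu)q(t)\,dt=\big((I+V_-)q\big)(x)=Q(x)$ by \eqref{2.7} and \eqref{3.8}, one obtains the identity
\begin{equation*}
q_2(x)=\frac{1-\mu}{2}\,q_1(x)+Q(x),\qquad 0<x<\infty .
\end{equation*}

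Next I would multiply this by $q_1(x)$ and use the normalization \eqref{2.16}, $q_1(x)q_2(x)=1/2$, which turns it into the quadratic relation
\begin{equation*}
(1-\mu)\,q_1(x)^2+2Q(x)\,q_1(x)-1=0 .
\end{equation*}
By \eqref{2.11} we have $q_1(x)=\exp\{\int_0^x R_t(t,0,\mu)\,dt\}>0$, and since the product of the two roots of this quadratic equals $-1/(1-\mu)<0$, exactly one root is positive; hence $q_1(x)$ must be that root,
\begin{equation*}
q_1(x)=\frac{-Q(x)+\sqrt{Q(x)^2+(1-\mu)}}{1-\mu}.
\end{equation*}

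Finally I would evaluate this formula along the sequence $x_n\to\infty$ given by \eqref{3.9}. Since $Q(x_n)\to 0$ and the map $s\mapsto\big(-s+\sqrt{s^2+(1-\mu)}\big)/(1-\mu)$ is continuous at $s=0$ (here $1-\mu>0$), it follows that
\begin{equation*}
\lim_{x_n\to\infty}q_1(x_n)=\frac{\sqrt{1-\mu}}{1-\mu}=\frac{1}{\sqrt{1-\mu}},
\end{equation*}
which is \eqref{3.10}.

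The argument is essentially forced once the identity $q_2=\tfrac{1-\mu}{2}q_1+Q$ is secured, so the only place that needs genuine care is the bookkeeping in that first substitution — correctly matching one integral term with $q_1$ and the other with $(I+V_-)q$ — and the branch choice for the square root, which is settled by $q_1>0$ and $1-\mu>0$. In particular no separate boundedness estimate for $q_1(x_n)$ is required, since the explicit solution of the quadratic supplies it automatically.
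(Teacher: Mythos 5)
Your proposal is correct and follows essentially the same route as the paper: the identity $q_2(x)=\tfrac{1-\mu}{2}q_1(x)+Q(x)$ is exactly the paper's \eqref{3.11}, the quadratic relation obtained from $q_1q_2=1/2$ is exactly \eqref{3.12}, and the conclusion uses $q_1>0$ just as the paper does. Your explicit solution of the quadratic merely spells out the final step that the paper states as ``follows directly,'' and it does so cleanly (in particular it makes the boundedness of $q_1(x_n)$ automatic).
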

\begin{proof}
In view of \eqref{2.9}, \eqref{2.12}, and \eqref{3.1} we get
\begin{equation}\label{3.11}
  q_{2}(x)=q_{1}(x)(1-\mu)/2+Q(x).
  \end{equation}
Taking into account the relation
 $q_{1}(x)q_{2}(x)=1/2$ (see \cite{19}, formulas (53) and  (56)),  we obtain   the equality
\begin{equation}\label{3.12}   1/2=q_{1}^{2}(x)(1-\mu)/2+q_{1}(x)Q(x).\end{equation}
 Formula  \eqref{3.10} follows directly from \eqref{3.9}, \eqref{3.12}, and inequality $q_{1}(x)>0$.
 \end{proof}

It follows from \eqref{2.18} and \eqref{3.10} that
 \begin{equation}\label{3.15}   T(x_{n}){\to}\left[\begin{array}{cc}
                            C &-C \\
                            1/2C & 1/2C
                          \end{array}\right],\quad x_{n}{\to}\infty,\quad C=1/\sqrt{(1-\mu)}.
                          \end{equation}
Hence, in view of \eqref{2.31}, \eqref{2.32}, \eqref{2.34}, and \eqref{3.15} the following assertion is true.
\begin{lem}\label{Lemma 3.3.} Let $x_n$ tend to $\infty$. Then,
$w_{1,2}$ has the following asymptotics
\begin{equation}\label{3.16}   w_{1,2}(x_{n},z)=-iCe^{ix_{n}z/2}\Psi(x_{n},z)\big(1+o(1)\big). 
\end{equation}
\end{lem}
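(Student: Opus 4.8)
The plan is to extract the $(1,2)$ entry of $W(x,z)$ from the explicit factorization \eqref{2.30} and then let $x=x_{n}\to\infty$. Write \eqref{2.30} in the form $W(x,z)=\tfrac12 e^{ixz/2}\,T(x)\,M(x,z)\,T^{-1}(0)$, where $M(x,z)$ is the $2\times2$ matrix with first row $(\Phi_{1}-i\Psi_{1},\ \Phi_{2}-i\Psi_{2})$ and second row $(\Phi_{1}+i\Psi_{1},\ \Phi_{2}+i\Psi_{2})$, and $T^{-1}(0)$ is given by \eqref{2.31}.

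The first step is to carry out this matrix product and read off the $(1,2)$ entry. The key structural observation is that, by \eqref{2.18}, the first row of $T(x)$ is exactly $q_{1}(x)\,(1,\,-1)$; multiplying this row into $M(x,z)$ therefore cancels the $\Phi_{n}$-terms and leaves the row $\big(-2i q_{1}(x)\Psi_{1},\ -2i q_{1}(x)\Psi_{2}\big)$. Since the second column of $T^{-1}(0)$ is $(1,\,1)^{T}$ by \eqref{2.31}, pairing it with this row produces $-2i q_{1}(x)\,(\Psi_{1}+\Psi_{2})=-2i q_{1}(x)\,\Psi(x,z)$ in view of \eqref{2.34}. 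Together with the scalar prefactor $\tfrac12 e^{ixz/2}$ this gives the exact identity
\[
  w_{1,2}(x,z)=-i\,q_{1}(x)\,e^{ixz/2}\,\Psi(x,z),\qquad 0<x<\infty .
\]

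It then remains only to pass to the limit along the sequence $x_{n}$. By Lemma \ref{Lemma 3.2.} (equivalently, by \eqref{3.15}) we have $q_{1}(x_{n})\to C=1/\sqrt{1-\mu}$, hence $q_{1}(x_{n})=C\,(1+o(1))$, and substituting this into the identity above yields \eqref{3.16}; the factor $1+o(1)$ enters multiplicatively, so no bound on $\Psi(x_{n},z)$ is needed, and the asymptotics hold for each fixed $z$ (uniformly on compact $z$-sets, since $q_{1}(x_{n})\to C$ does not involve $z$). I do not anticipate any genuine obstacle: everything follows directly from \eqref{2.30}, \eqref{2.18}, \eqref{2.31}, \eqref{2.34} and Lemma \ref{Lemma 3.2.}. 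The only point that requires care is the bookkeeping in the $2\times2$ product, and specifically the observation that the special form $q_{1}(x)(1,-1)$ of the first row of $T(x)$ is exactly what makes the $\Phi_{n}$-contributions to $w_{1,2}$ cancel, leaving only the $\Psi$-term.
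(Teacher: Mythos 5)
Your proof is correct and follows essentially the same route as the paper: the paper's (implicit) argument is precisely to read off the $(1,2)$ entry of the factorization \eqref{2.30} using \eqref{2.31}, \eqref{2.34} and the limit \eqref{3.15}, which is what you do, only more cleanly via the exact identity $w_{1,2}(x,z)=-i\,q_{1}(x)\,e^{ixz/2}\Psi(x,z)$. The only caveat (shared with the paper itself) is that the convergence $q_{1}(x_{n})\to C$ requires the sequence $x_{n}$ to satisfy \eqref{3.9}, as in Lemma \ref{Lemma 3.2.}, not merely $x_{n}\to\infty$ as the lemma's literal wording suggests.
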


\begin{lem} \label{Lemma 3.4.}
Suppose that the operator $S$ admits a factorization.
Then we have
\begin{equation}\label{3.17}    \lim_{\xi{\to}\infty}e^{-iz\xi}w_{1,2}(\xi,z)=G(z),\quad {\Im}z<0,\end{equation}
\begin{equation}\label{3.18}    \lim_{\xi{\to}\infty}w_{1,2}(\xi,z)=-\overline{G(\overline{z})},\quad {\Im}z>0.\end{equation}
{where}
\begin{equation}\label{3.19}    G(z)=\frac{1}{1-\mu}[1-iz\int_{0}^{\infty}e^{-izx}r(x)dx], \quad
r(x)=S^{-1}q(x).\end{equation}
\end{lem}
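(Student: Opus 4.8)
The plan is to start from Lemma~3.1, which already gives $w_{1,2}(\xi,z)=e^{iz\xi}G(\xi,z)-\overline{G(\xi,\overline z)}$ with $G(\xi,z)=\frac{1}{1-\mu}\bigl[1-iz\int_{0}^{\xi}e^{-izx}r_{\xi}(x)\,dx\bigr]$ and $r_{\xi}=S_{\xi}^{-1}q$, and then to let $\xi\to\infty$ in each ingredient. The factorization hypothesis enters through one auxiliary claim, which I expect to be the crux:
\[
r_{\xi}\longrightarrow r=S^{-1}q\qquad\text{in }L^{2}(0,\infty),
\]
each $r_{\xi}$ being extended by $0$ outside $(0,\xi)$. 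To prove this I would combine Krein's formula \eqref{2.6} with \eqref{2.8}. Since the kernel of $V_{-}$ does not depend on the endpoint, the lower-triangularity of $V_{-}$ yields $(I+V_{-})P_{\xi}q=P_{\xi}Q$ with $Q=(I+V_{-})q\in L^{2}(0,\infty)$ as in \eqref{3.8}; applying $I+V_{+}$ (bounded, since $S$ is factorable: $I+V_{+}$ is the adjoint of the bounded triangular factor $I+V_{-}$) and using its upper-triangularity gives $r_{\xi}=(I+V_{+})P_{\xi}Q$ and $r=(I+V_{+})Q$, so that $r_{\xi}-r=-(I+V_{+})Q_{\xi}Q\to0$ because $Q\in L^{2}(0,\infty)$.

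Two consequences of this convergence will be used. First, for $\Im z<0$ the exponential $e^{-izx}$ lies in $L^{2}(0,\infty)$, hence $\int_{0}^{\xi}e^{-izx}r_{\xi}(x)\,dx\to\int_{0}^{\infty}e^{-izx}r(x)\,dx$ and $G(\xi,z)\to G(z)$; likewise, for $\Im z>0$ one has $e^{izx}\in L^{2}(0,\infty)$ and, using that $q$ (hence $r_{\xi}$ and $r$) is real, $\overline{G(\xi,\overline z)}=\frac{1}{1-\mu}\bigl[1+iz\int_{0}^{\xi}e^{izx}r_{\xi}(x)\,dx\bigr]\to\frac{1}{1-\mu}\bigl[1+iz\int_{0}^{\infty}e^{izx}r(x)\,dx\bigr]=\overline{G(\overline z)}$. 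Second, the reflected functions $U_{\xi}r_{\xi}$ converge weakly to $0$ in $L^{2}(0,\infty)$: their norms $\|U_{\xi}r_{\xi}\|=\|r_{\xi}\|$ are bounded, and for $\phi$ supported in $(0,N)$ and $\xi>N$ one has $|\langle U_{\xi}r_{\xi},\phi\rangle|=\bigl|\int_{\xi-N}^{\xi}r_{\xi}(t)\overline{\phi(\xi-t)}\,dt\bigr|\le\|\phi\|\,\bigl(\int_{\xi-N}^{\xi}|r_{\xi}|^{2}\bigr)^{1/2}\to0$, the last step because $r_{\xi}$ is $L^{2}$-close to the fixed function $r$ whose tail vanishes.

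It then remains to assemble the pieces, using the elementary substitution $u=\xi-x$ and the symmetry $S_{\xi}^{-1}=U_{\xi}S_{\xi}^{-1}U_{\xi}$ from \eqref{3.3}. For $\Im z<0$, write $e^{-iz\xi}w_{1,2}(\xi,z)=G(\xi,z)-e^{-iz\xi}\overline{G(\xi,\overline z)}$, where the substitution turns the second term into $\frac{1}{1-\mu}\bigl[e^{-iz\xi}+iz\int_{0}^{\xi}e^{-izu}(U_{\xi}r_{\xi})(u)\,du\bigr]$; this tends to $0$ because $|e^{-iz\xi}|=e^{(\Im z)\xi}\to0$ and $e^{-izu}\in L^{2}(0,\infty)$ while $U_{\xi}r_{\xi}\rightharpoonup0$. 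Together with $G(\xi,z)\to G(z)$ this gives \eqref{3.17}. For $\Im z>0$, the same substitution gives $e^{iz\xi}G(\xi,z)=\frac{1}{1-\mu}\bigl[e^{iz\xi}-iz\int_{0}^{\xi}e^{izu}(U_{\xi}r_{\xi})(u)\,du\bigr]\to0$ (now $|e^{iz\xi}|=e^{-(\Im z)\xi}\to0$ and $e^{izu}\in L^{2}(0,\infty)$), while $\overline{G(\xi,\overline z)}\to\overline{G(\overline z)}$ by the first consequence above; hence $w_{1,2}(\xi,z)=e^{iz\xi}G(\xi,z)-\overline{G(\xi,\overline z)}\to-\overline{G(\overline z)}$, which is \eqref{3.18}.

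The routine parts are the two changes of variable and the conjugation identities for $G(\xi,\overline z)$; the only substantive input is the $L^{2}$-convergence $r_{\xi}\to r$, which rests squarely on the factorability of $S$, so that the proof indeed uses the hypothesis in an essential way.
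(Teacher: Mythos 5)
Your proof is correct and follows essentially the same route as the paper: the factorization hypothesis, via Krein's formula \eqref{2.6} and \eqref{2.8}, gives $r_{\xi}=S_{\xi}^{-1}q\to r=S^{-1}q$ in $L^{2}(0,\infty)$, and one then passes to the limit in the representation \eqref{3.6}--\eqref{3.7}. The paper compresses the second step into one sentence; your treatment of the cross terms $e^{-iz\xi}\overline{G(\xi,\overline z)}$ and $e^{iz\xi}G(\xi,z)$ via the reflection $U_{\xi}r_{\xi}\rightharpoonup 0$ is exactly the detail it omits, and it is sound.
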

\begin{proof}
According to \eqref{2.8} we have
$S_{-}^{-1}=I+V_{-}$, where $V_{-}$ is defined by \eqref{2.7}.  Hence,  the operator function
 $S_{\xi}^{-1}$ strongly converges to the operator  $S^{-1}$ when $\xi{\to}\infty$. Then
 the function $r_{\xi}(x)=S_{\xi}^{-1}q(x)$ strongly converges to $r(x)=S^{-1}q(x)$, when
 $\xi{\to}\infty$, and
 $r(x){\in}L^{2}(0,\infty)$. Using \eqref{3.6} and \eqref{3.7} we obtain relations \eqref{3.17} and \eqref{3.18}. The lemma is proved.
 \end{proof}

 From Lemma \ref{Lemma 3.4.} we derive the following important assertion.

 \begin{prop}\label{Proposition 3.1.}
 {If at least one of the equalities \eqref{3.17} and  \eqref{3.18} is not true, then
the corresponding operator $S$ does not admit factorization.}
\end{prop}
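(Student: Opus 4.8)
The plan is to observe that Proposition~\ref{Proposition 3.1.} is nothing but the contrapositive of Lemma~\ref{Lemma 3.4.}, so the proof is essentially immediate. Lemma~\ref{Lemma 3.4.} establishes the implication ``$S$ admits a factorization $\Longrightarrow$ both \eqref{3.17} and \eqref{3.18} hold (with $G(z)$ given by \eqref{3.19})''. Negating the conclusion — i.e.\ assuming that at least one of the limit relations \eqref{3.17}, \eqref{3.18} fails — therefore forces the negation of the hypothesis, namely that $S$ does not admit a factorization.

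Concretely, I would argue by contradiction. Suppose $S$ does admit a factorization. Then Lemma~\ref{Lemma 3.4.} applies: using \eqref{2.8} one has $S_{-}^{-1}=I+V_{-}$, the operators $S_{\xi}^{-1}$ converge strongly to $S^{-1}$, hence $r_{\xi}=S_{\xi}^{-1}q$ converges to $r=S^{-1}q\in L^{2}(0,\infty)$, and then the representation \eqref{3.6}--\eqref{3.7} of $w_{1,2}(\xi,z)$ yields both \eqref{3.17} and \eqref{3.18}. This contradicts the standing assumption that one of these equalities is not true. Consequently $S$ cannot admit a factorization, which is the assertion of the proposition.

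There is no real obstacle here: all the substantive work has already been done in Lemmas~\ref{Lemma 3.1.}--\ref{Lemma 3.4.}, and the only thing to be careful about is invoking Lemma~\ref{Lemma 3.4.} correctly (in particular that it gives \emph{both} \eqref{3.17} and \eqref{3.18}, so that the failure of \emph{either} one is enough). The point of stating Proposition~\ref{Proposition 3.1.} separately is practical: it recasts the criterion in a directly usable form, so that to produce a concrete non-factorable operator — such as \eqref{1.3} — it now suffices to analyse the boundary behaviour of $w_{1,2}(\xi,z)$ as $\xi\to\infty$ and check that \eqref{3.17} or \eqref{3.18} breaks down, which is the task taken up in the sections that follow.
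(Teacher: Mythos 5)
Your proposal is correct and matches the paper's approach: the paper states Proposition \ref{Proposition 3.1.} as an immediate consequence of Lemma \ref{Lemma 3.4.}, and your observation that it is simply the contrapositive (so that failure of either \eqref{3.17} or \eqref{3.18} rules out factorization) is exactly the intended argument.
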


Note that a new approach to the notion of the limit of a function was used in Lemma \ref{Lemma 3.2.}. Namely,
we introduce a continuous function $F(x)$, which belongs to $L(0,\infty)$, and consider
sequences $x_{n}{\to}\infty$, such that
\begin{equation}\label{d1} F(x_{n}){\to}0.\end{equation}
\begin{defn} \label{Definition 3.1.} We  say that the function $f(x)$ tends to $A$
almost sure $(a.s.)$ if relation \eqref{d1} implies
\begin{equation}\label{d2} f(x_{n}){\to}A,\qquad x_{n}{\to}\infty.\end{equation}
\end{defn}
Equality (3.10) can be written in the form
\begin{equation}\label{d3} \lim_{x{\to}\infty}{q_{1}(x)}=\frac{1}{\sqrt{1-\mu}},\quad  a.s. \end{equation}
\begin{rem}\label{Remark 3.1.}
From heuristic point of view "almost all" sequences $x_{n}{\to}\infty$
satisfy relation \eqref{d1}. This is the reason of using the probabilistic term
"almost sure".
\end{rem}

\section{A class of non-factorable positive definite operators}\label{sec4}

Introduce a partition
\begin{equation}\label{4.1}
0=a_{0}<a_{1}<...<a_{n}=a,
\end{equation}
and consider the function
$\rho(\lambda)=\rho(-\lambda)$ such that
\begin{equation}\label{4.2}
\rho(\lambda)= \left\{
\begin{array}{rl}
0, & a \leq \lambda, \\
b_{k-1}, & a_{k-1}{\leq}\lambda<a_{k},
\end{array}
\right.
\end{equation}
where
\begin{equation}\label{4.3}
b_{0}=1;\quad -1{\leq}b_{k}{\leq}1\quad (0<k{\leq}n-1).\end{equation}
In the case of $\rho$ given by \eqref{4.2} and  \eqref{4.3} we can put  $U=1$ in \eqref{d0}.
 Further we investigate the operators $S$, which are defined by formulas \eqref{2.1}, \eqref{2.2}, and
\eqref{4.2}. The spectral function $\sigma(\lambda)$ of the corresponding system
\eqref{2.35} is absolutely continuous and such that (see \cite{6}):
\begin{equation}\label{4.4}   \sigma^{\prime}(\lambda)=[1-\mu\rho(\lambda)]/(2\pi).\end{equation}
\begin{rem}\label{Remark 4.1.}
The operators $S$, which are defined by formulas \eqref{2.1}, \eqref{2.2}, and
\eqref{4.2}, appear in the theory of generalized stationary processes of
white noise type (see \cite{11,12}). If $n=1$ and $a_{1}=\pi$,  then the corresponding operator $S$ has  the form \eqref{1.3}.
\end{rem}

It follows from \eqref{2.2} and \eqref{4.2} that
\begin{equation}\label{4.5}   h(x)=\frac{1}{\pi}\sum_{k=1}^{n}b_{k}\frac{{\sin}a_{k}x-{\sin}a_{k-1}x}{x}.
\end{equation}
According to \eqref{4.4} we have
\begin{equation}\label{4.6}   \int_{-\infty}^{\infty}\frac{{\log}\sigma^{\prime}(u)}{1+u^{2}}du<\infty.
\end{equation}
It follows from \eqref{4.7}  (see \cite{6}) that
\begin{equation}\label{4.7}   \int_{0}^{\infty}|P(x,z_{0})|^{2}dx<\infty,\quad {\Im}z_{0}>0.\end{equation}
Hence, there exists a sequence $x_{n}$ such that
\begin{equation}\label{4.8}   |P(x_{n},z_{0})|^{2}{\to}0,\quad x_{n}{\to}\infty.\end{equation}
Now, we use the  corrected form of Krein's theorem (see \cite{6,21}):

\begin{prop}\label{Proposition 4.1.}
$1)$ { There exists the limit}
\begin{equation}\label{4.9}   \Pi(z)=\lim_{x_{n}{\to}\infty} P_{\ast}(x_{n},z),\end{equation}
{where the convergence is uniform at any bounded closed set  of the upper half-plane
$\Im z>0$. }\\
$2)$ {The function $\Pi(z)$ can be represented in the form}
\begin{equation}\label{4.10}    \Pi(z)=\frac{1}{\sqrt{2\pi}}\exp\left\{\frac{1}{2i\pi}\int_{-\infty}^{\infty}
\frac{1+tz}{(z-t)(1+t^{2})}(\log{\sigma}^{\prime}(t))dt+i\alpha\right\} ,\end{equation}
{where} $\alpha=\overline{\alpha}$. Here  $\sigma$ is the spectral function of system \eqref{2.35},
which corresponds to $\rho$ given by \eqref{4.2} and \eqref{4.3}, that is, this $\s$
is defined by \eqref{4.4}.
\end{prop}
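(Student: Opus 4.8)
The plan is to deduce Proposition~\ref{Proposition 4.1.} from M.G.~Krein's limit theorem for his continuous analogue of orthogonal polynomials, in the corrected form of \cite{6,21}, applied to the Krein system \eqref{2.35} attached to $S$. First I would check that the hypotheses are met. By \eqref{4.4} the spectral function is absolutely continuous with $\sigma'(\lambda)=[1-\mu\rho(\lambda)]/(2\pi)$; since $|\rho(\lambda)|\le1$ and $|\mu|<1$, this density is continuous and satisfies $0<c_1\le\sigma'(\lambda)\le c_2<\infty$, so $\log\sigma'$ is bounded and the Szeg\H{o}-type integral \eqref{4.6} converges (indeed absolutely). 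By Krein's theory \cite{6} this yields the $L^2$-bound \eqref{4.7}, in fact for every $z$ in the open upper half-plane, with the $L^2$-norm locally bounded in $z$; fixing $z_0$ with $\Im z_0>0$, \eqref{4.7} forces $P(x,z_0)\to0$ along some sequence $x_n\to\infty$, which is \eqref{4.8}.

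For part~1), the engine is the energy identity for \eqref{2.35}: differentiating $|P|^2$ and $|P_\ast|^2$ and using that $B$ is real one gets $\frac{d}{dx}\bigl(|P_\ast(x,z)|^2-|P(x,z)|^2\bigr)=2(\Im z)|P(x,z)|^2$, hence for $\Im z>0$
\[
|P_\ast(x,z)|^2=|P(x,z)|^2+2(\Im z)\int_0^x|P(s,z)|^2\,ds .
\]
Two consequences follow: for $x>0$ and $\Im z>0$ the right-hand side is strictly positive, so $P_\ast(x,\cdot)$ is zero-free in the open upper half-plane and $P(x,\cdot)/P_\ast(x,\cdot)$ has modulus $<1$ there; and, combined with \eqref{4.7}, the family $\{P_\ast(x,\cdot)\}$ is uniformly bounded on compacta and bounded away from $0$ there for all large $x$. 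By Montel's theorem, along a subsequence of the $x_n$ of \eqref{4.8} the functions $P_\ast(x_n,\cdot)$ converge uniformly on compacta to a zero-free analytic limit $\Pi(z)$, which is not identically $0$ since $|P_\ast(x_n,z_0)|^2\to2(\Im z_0)\int_0^\infty|P(s,z_0)|^2\,ds>0$. This is exactly the existence assertion of the corrected Krein theorem of \cite{6,21}.

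For part~2) the limit must be identified. Its boundary modulus is pinned down by the spectral density — the continuous analogue of the Szeg\H{o}/M\'at\'e--Nevai--Totik boundary asymptotics gives $|\Pi(\lambda)|^2=1/(2\pi\sigma'(\lambda))$ for a.e.\ real $\lambda$ — and $\Pi$ is the outer function with that modulus (zero-freeness plus the Smirnov-class structure inherited from Krein's theory). The Herglotz/Poisson--Schwarz representation of $\log\Pi$ then gives precisely
\[
\Pi(z)=\frac{1}{\sqrt{2\pi}}\exp\Bigl\{\frac{1}{2i\pi}\int_{-\infty}^{\infty}\frac{1+tz}{(z-t)(1+t^2)}\,\log\sigma'(t)\,dt+i\alpha\Bigr\},\qquad\alpha=\overline\alpha ,
\]
the normalization being checked by letting $z\to\lambda$ and using that $\Re\,\frac{1}{2i\pi}\int\frac{\log\sigma'(t)}{z-t}\,dt\to-\tfrac12\log\sigma'(\lambda)$ while the term $\frac{t}{1+t^2}$ contributes only to the argument. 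The free real parameter $\alpha$ is genuinely undetermined: unlike orthogonal polynomials on the circle, where $\varphi_n^\ast(0)>0$ fixes the argument, the continuous system carries no such normalization at $x=\infty$.

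The main obstacle — and the reason one must invoke the \emph{corrected} form rather than Krein's original statement — is precisely part~1): in the discrete (OPUC) setting convergence of $P_\ast$ and the identification of the limit are automatic from Szeg\H{o}'s theorem, but in the continuous case one can only assert convergence along the subsequence furnished by \eqref{4.8}, and one must carry along the modulus-one ambiguity, which is why the statement is phrased with convergence only along $x_n$ and with a free phase $e^{i\alpha}$. Everything else (the energy identity, the normal-families argument, the boundary-value computation) is routine once this is granted.
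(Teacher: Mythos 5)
The paper does not actually prove Proposition 4.1: it is invoked verbatim as ``the corrected form of Krein's theorem'' with a pointer to \cite{6} and \cite{21}, so there is no internal argument to compare yours against. Your sketch follows the standard route to that theorem (energy identity, normal families, Szeg\H{o}-type identification of the boundary modulus), and several of its ingredients are sound: the identity $\frac{d}{dx}\bigl(|P_{\ast}|^{2}-|P|^{2}\bigr)=2(\Im z)|P|^{2}$ is correct, it does give zero-freeness of $P_{\ast}(x,\cdot)$ in the open upper half-plane and a lower bound away from $0$ on compacta, and your boundary formula $|\Pi(\lambda)|^{2}=1/(2\pi\sigma'(\lambda))$ is consistent with \eqref{4.10}. (A small slip: $\sigma'$ here is a step function, not continuous; only its two-sided positive bounds matter.)

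There are, however, genuine gaps at exactly the hard points. First, the Montel step is circular as written: the energy identity gives $|P_{\ast}(x,z)|^{2}=|P(x,z)|^{2}+2(\Im z)\int_{0}^{x}|P|^{2}\,ds$, and the only upper bound you have on $|P(x,z)|$ is $|P|\le|P_{\ast}|$, so uniform boundedness of $P_{\ast}(x_{n},\cdot)$ on compacta does not follow from \eqref{4.7} plus this identity; the actual proofs control $P_{\ast}$ through its relation to the spectral measure (Christoffel-function/entropy arguments), which you do not supply. Second, the identification of the limit as the outer function with modulus $1/\sqrt{2\pi\sigma'}$ --- i.e.\ the continuous Szeg\H{o} asymptotics and the Smirnov-class/outer structure --- is the substantive content of Krein's theorem, and you cite it rather than derive it. Third, Montel only yields convergence along a sub-subsequence, and since each subsequential limit is determined only up to a unimodular factor $e^{i\alpha}$, different subsequences could a priori carry different phases; so part 1) as stated (existence of the limit along the sequence $x_{n}$ of \eqref{4.8}) is not established. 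This last point is harmless for the application in Theorem 4.1 (one may pass to a further subsequence), but it is precisely the delicacy that the ``corrected form'' in \cite{21} is meant to address, so it should not be glossed over. In short: if the proposition is to be taken as a citation, your preamble verifying the hypotheses (Szeg\H{o} condition via \eqref{4.4}, hence \eqref{4.7} and \eqref{4.8}) is all that is needed; if it is to be proved, the three steps above still have to be filled in.
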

\begin{rem}\label{Remark 4.2.}
The function $|Q(x)|^{2}+|P(x,z_{0})|^{2}$ belongs to
the space $L(0,\infty)$. Hence, there exists a sequence $x_{n}$ such that
relations \eqref{3.9} and  \eqref{4.8} are true simultaneously.
\end{rem}

If \eqref{4.5} holds, then the following conditions are fulfilled:
\begin{equation}\label{4.11}    0<\delta{\leq}||S||{\leq}\Delta<\infty,\qquad \int_{0}^{\infty}|h(x)|^{2}dx<\infty.\end{equation}
Therefore,
 in formula \eqref{4.10} we have (see \cite{19}, Proposition 1):
\begin{equation}\label{4.12}    \alpha =0.\end{equation}
One can easily  see that
\begin{equation}\label{4.13}   -\frac{1}{2i\pi}\int_{-\infty}^{\infty}
\frac{1+tz}{(z-t)(1+t^{2})}\log(2\pi)dt=\frac{1}{2}\log(2\pi). \end{equation}
It follows from \eqref{4.10}, \eqref{4.12}, and \eqref{4.13} that $\Pi(z)$ has the form
\begin{equation}\label{4.14}   \Pi(z)=
\prod_{k=0}^{n-1}\left[\left(\frac{a_{k+1}+z}{a_{k+1}-z}\right)\left(\frac{a_{k}-z}{a_{k}+z}\right)\right]^{\log(1-b_{k}\mu)/2i\pi},\quad {\Im}z>0.
\end{equation}
Next, we prove  the main result of this paper.
 \begin{thm}\label{Theorem 4.1.}
 { The bounded  positive definite and invertible operator $S$, which is defined by formulas \eqref{2.1} and \eqref{4.5},
does not admit a left triangular factorization.}
\end{thm}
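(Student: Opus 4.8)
The plan is to argue by contradiction, combining the asymptotic information about $w_{1,2}(\xi,z)$ accumulated in Section 3 with the explicit computation of the limiting function $\Pi(z)$ in \eqref{4.14}. Suppose $S$ admits a left triangular factorization. First I would fix a point $z_0$ with $\Im z_0>0$ and invoke Remark \ref{Remark 4.2.}: there is a single sequence $x_n\to\infty$ along which both $Q(x_n)\to 0$ (so that Lemma \ref{Lemma 3.2.} and hence \eqref{3.15} apply) and $|P(x_n,z_0)|^2\to 0$ (so that Proposition \ref{Proposition 4.1.} applies). Along this sequence Lemma \ref{Lemma 3.3.} gives $w_{1,2}(x_n,z)=-iC e^{ix_nz/2}\Psi(x_n,z)(1+o(1))$ with $C=1/\sqrt{1-\mu}$, while \eqref{2.37} rewrites $e^{ix_nz/2}\Psi(x_n,z)$ in terms of $P(x_n,z)-P_\ast(x_n,z)$; since $P(x_n,z_0)\to 0$ and, by Proposition \ref{Proposition 4.1.}, $P_\ast(x_n,z)\to\Pi(z)$ uniformly on compacta of the upper half-plane, we get $w_{1,2}(x_n,z_0)\to iC\,\Pi(z_0)$.

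On the other hand, the assumed factorization lets me apply Lemma \ref{Lemma 3.4.}: for $\Im z>0$ the \emph{full} limit $\lim_{\xi\to\infty}w_{1,2}(\xi,z)=-\overline{G(\overline z)}$ exists, where $G$ is the honest $H^?$-type function \eqref{3.19} built from $r=S^{-1}q\in L^2(0,\infty)$. In particular the subsequential limit just computed must coincide with this value, so
\[
iC\,\Pi(z)=-\overline{G(\overline z)},\qquad \Im z>0.
\]
The right-hand side, by \eqref{3.19}, is $-\dfrac{1}{1-\mu}\Bigl[1-i z\displaystyle\int_0^\infty e^{-izx}\overline{r(x)}\,dx\Bigr]$ up to the obvious conjugation bookkeeping; the key structural point is that $\int_0^\infty e^{-izx}r(x)\,dx$ is the Fourier–Laplace transform of an $L^2(0,\infty)$ function, hence extends to a function on $\Im z<0$ (respectively its conjugate partner on $\Im z>0$) that is bounded by $o(1/|z|)$-type bounds as $|z|\to\infty$ inside the half-plane. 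Consequently $-\overline{G(\overline z)}$, and therefore $\Pi(z)$, must behave like a constant $+\,o(1)$ as $|z|\to\infty$ in the upper half-plane, and must be free of singularities there.

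The contradiction then comes from the explicit product formula \eqref{4.14}: each factor $\bigl[(a_{k+1}+z)(a_k-z)/((a_{k+1}-z)(a_k+z))\bigr]^{\log(1-b_k\mu)/2i\pi}$ has branch points at the real points $\pm a_k$, and for generic admissible data $(b_k)$ — e.g. already for $n=1$, $b_0=1$, giving the operator \eqref{1.3} with exponent $\log(1-\mu)/2i\pi\neq 0$ — the exponents are non-integer and non-real, so $\Pi(z)$ genuinely has a nontrivial multivalued/oscillatory character and does \emph{not} tend to a finite limit as $z\to\infty$ along, say, rays approaching the real axis, nor is it consistent with being $-\overline{G(\overline z)}$ for an $L^2$-datum $r$. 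Spelling out exactly which quantitative feature of \eqref{4.14} is incompatible with the Hardy-space growth/continuity of $-\overline{G(\overline z)}$ is the one delicate point: concretely I would compare boundary values on the real axis — $\Pi(\lambda+i0)$ has modulus $1/\sqrt{2\pi}$ times a non-constant phase jumping across each $a_k$, whereas the boundary function of $-\overline{G(\overline z)}$ built from $r\in L^2$ cannot have this prescribed unimodular-up-to-constant behaviour unless all exponents vanish, i.e. unless $\mu=0$.

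Thus the main obstacle, and the step I expect to require the most care, is the passage from "$-\overline{G(\overline z)}$ comes from an $L^2(0,\infty)$ kernel" to a concrete analytic constraint (continuity across the real line away from $0$, or the $|z|\to\infty$ asymptotics) that \eqref{4.14} provably violates; everything else is assembling Lemmas \ref{Lemma 3.2.}–\ref{Lemma 3.4.} and Proposition \ref{Proposition 4.1.} along the common sequence $x_n$ furnished by Remark \ref{Remark 4.2.}. Once that incompatibility is isolated, the assumption that $S$ factors is refuted, proving the theorem.
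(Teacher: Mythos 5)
Your assembly of the first half of the argument is essentially the paper's: you take the common sequence $x_n$ from Remark \ref{Remark 4.2.}, combine Lemma \ref{Lemma 3.3.} with \eqref{2.37}, $P(x_n,z_0)\to 0$ and Proposition \ref{Proposition 4.1.} to get a subsequential limit of $w_{1,2}(x_n,z)$ proportional to $\Pi(z)$ (the correct constant is $-C\Pi(z)$ with $C=1/\sqrt{1-\mu}$, not $iC\Pi(z)$ --- from \eqref{2.37} one has $e^{ixz/2}\Psi=i(P-P_\ast)$, so $-iC\cdot i(P-P_\ast)\to -C\Pi$; but that is only a bookkeeping slip), and then you invoke Lemma \ref{Lemma 3.4.} under the factorization hypothesis to equate this with $-\overline{G(\overline z)}$. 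Up to \eqref{4.24} this is exactly the paper's proof.

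The genuine gap is the final step, which you yourself flag as "the one delicate point" and do not carry out: you never actually exhibit a concrete incompatibility between $-C\Pi(z)$ and $-\overline{G(\overline z)}$. The routes you sketch would not work as stated. The claim that $\int_0^\infty e^{-izx}r(x)\,dx$ for $r\in L^2$ forces $-\overline{G(\overline z)}$ to be "constant $+\,o(1)$ as $|z|\to\infty$" is not justified ($z\hat r(z)$ need not vanish at infinity for $L^2$ data), and the boundary-value comparison is off: by \eqref{4.10} the boundary modulus of $\Pi$ is $[2\pi\sigma'(\lambda)]^{-1/2}$, a step function, not "$1/\sqrt{2\pi}$ times a phase", and turning "cannot have this behaviour unless $\mu=0$" into a proof would require real work. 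The paper's resolution is far simpler and entirely local: evaluate both sides at $z=iy$, $y\to+0$. Because $a_0=0$ and $b_0=1$, the $k=0$ factor in \eqref{4.14} has a branch point at the origin and contributes $(-1)^{\log(1-\mu)/2i\pi}=\sqrt{1-\mu}$, so $\Pi(iy)\to\sqrt{1-\mu}$ and $-C\Pi(iy)\to -1$ (formulas \eqref{4.25}--\eqref{4.27}); meanwhile $iz\int_0^\infty e^{-izx}r(x)\,dx\to 0$ as $z=-iy\to 0$ by Cauchy--Schwarz, so $-\overline{G(-iy)}\to -1/(1-\mu)$. Since $\mu\neq 0$ these differ, giving the contradiction \eqref{4.29}. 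Without this (or an equivalent) explicit evaluation your argument does not close.
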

\begin{proof}
Taking into account  Lemma 3.3 and relations  \eqref{2.37},  \eqref{4.8},  and \eqref{4.9} we have
 \begin{equation}\label{4.24}    \lim_{x_{n}{\to}\infty}w_{1,2}(x_{n},z)=-C\Pi(z),\quad {\Im}z>0,\quad C=1/\sqrt{(1-\mu)}.\end{equation}
Now, we use the following relations
\begin{equation}\label{4.25}   \lim_{y{\to}+0}\left(\frac{a_{k+1}-iy}{a_{k+1}+iy}\right)\left(\frac{a_{k}+iy}{a_{k}-iy}\right)=1,\quad k>0,
 \end{equation}
 \begin{equation}\label{4.26}   \lim_{y{\to}+0}\left(\frac{a_{k+1}-iy}{a_{k+1}+iy}\right)\left(\frac{a_{k}+iy}{a_{k}-iy}\right)=-1,\quad k=0.
 \end{equation}
 Formulas \eqref{4.14},  \eqref{4.25}, and \eqref{4.26} imply that
  \begin{equation}\label{4.27}   \lim_{y{\to}+0}\Pi(iy)=\sqrt{(1-\mu)}.\end{equation}

 Suppose that the operator $S$ admits a factorization.
It follows from  the asymptotics of sinus integral (see \cite{2}, Ch. 9, formulas (2) and (10)),
that the kernel $h(x)$, defined by formula \eqref{4.5}, satisfies conditions \eqref{3.2}.
Hence, the conditions of Lemma 3.4 are fulfilled.
 Comparing formulas \eqref{3.18} and \eqref{4.24},  we see that
\begin{equation}\label{4.29}   -\lim_{y{\to}+0}\overline{G(-iy)}=-1/(1-\mu){\ne}-C\lim_{y{\to}+0}\Pi(iy)=-1.\end{equation}
Hence, the relation
 \eqref{3.18} is not true. According to Proposition 3.1 the operator $S$ does not admit a factorization.
 The theorem is proved.
 \end{proof}


\section{Examples instead of existence theorems} \label{sec5}

Let the nest $N$ be the family of subspaces $Q_{\xi}L^{2}(0,\infty).$
The corresponding \emph{nest algebra} $Alg(N)$ is the algebra of all linear bounded operators
 in the space $L^{2}(0,\infty)$ for which every subspace from $N$ is an invariant subspace. Put $D_{N}=Alg(N){\bigcap}Alg(N)^{\ast}$.
 The set $N$ has \emph{ multiplicity one} if the diagonal $D_{N}$ is abelian, that is,
 $D_{N}$ is a commutative algebra.
 We can see that the lower triangular operators $S_{-}$ form the algebra
 $Alg(N)$, the corresponding diagonal $D_{N}$ is abelian, and it consists
 of the commutative  operators
 \begin{equation}\label{5.1}   T_{\varphi}f=\varphi(x)f,\quad f{\in}L^{2}(0,\infty),\end{equation}
 where $\varphi(x)$ is bounded. Hence, the introduced nest $N$ has the multiplicity $1$.\\

\noindent {\textbf{Ringrose Problem.}}
 \textit{Let} $N$ \textit{be a multiplicity one nest and} $T$ \textit{be a bounded invertible operator. Is} $TN$ \textit{necessarily multiplicity one nest?}\\

We obtain a concrete counterexample to Ringrose's hypothesis.

  \begin{prop}\label{Proposition 5.1.}
{Let the positive definite,
invertible operator $S$ be defined by the relations \eqref{2.1} and \eqref{4.5}.
 The  set $S^{1/2}N$ fails to have  multiplicity 1.}
 \end{prop}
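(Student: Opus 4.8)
The plan is to connect the triangular factorizability of $S$ with the multiplicity of the nest $S^{1/2}N$, and then invoke Theorem \ref{Theorem 4.1.}. The key observation is a general principle from nest algebra theory: for a positive definite invertible operator $S$, the nest $S^{1/2}N$ has multiplicity one if and only if $S$ admits a left triangular factorization with respect to $N$. Indeed, if $S = S_{-}S_{-}^{\ast}$ with $S_{-}$ and $S_{-}^{-1}$ both lower triangular (i.e.\ in $Alg(N)$), then $S^{1/2} = S_{-}U$ for some unitary $U$, and conjugating the diagonal $D_{S^{1/2}N}$ back by $S_{-}$ identifies it with a subalgebra related to $D_{N}$, which is abelian by the discussion preceding the Ringrose Problem; hence $S^{1/2}N$ has multiplicity one. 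Conversely, if $S^{1/2}N$ has multiplicity one, a theorem of Larson (the same circle of ideas underlying \cite{8}) guarantees that $S$ factors.

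Concretely, I would argue as follows. First, recall that $N$ is a multiplicity one nest with abelian diagonal $D_{N}$ consisting of the multiplication operators \eqref{5.1}. Second, by Theorem \ref{Theorem 4.1.}, the operator $S$ defined by \eqref{2.1} and \eqref{4.5} does \emph{not} admit a left triangular factorization. Third, I would show that if $S^{1/2}N$ were a multiplicity one nest, then $S$ would necessarily factor, contradicting Theorem \ref{Theorem 4.1.}. This third step is the heart of the matter: one uses that $S$ is positive definite and invertible, so $S^{1/2}$ is a bounded invertible operator mapping the nest $N$ onto the nest $S^{1/2}N$; the spatial isomorphism $T = S^{1/2}$ intertwines $Alg(N)$ and $Alg(S^{1/2}N)$. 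If $D_{S^{1/2}N}$ is abelian, then by the characterization of operators implementing a multiplicity-one-preserving similarity (Larson's similarity theorem for nest algebras), $S^{1/2}$ can be written as $A U$ with $A, A^{-1} \in Alg(N)$ and $U$ unitary; then $S = S^{1/2}(S^{1/2})^{\ast} = A U U^{\ast} A^{\ast} = A A^{\ast}$ is the desired left triangular factorization with $S_{-} = A$.

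The main obstacle is making the third step rigorous without reproving Larson's structure theory: one must be careful that "multiplicity one of $S^{1/2}N$" yields not merely boundedness of the factoring operator but its membership in $Alg(N)$ together with bounded inverse, i.e.\ the full strength of Definition \ref{Definition 1.2.}. I expect the cleanest route is to cite the relevant result of Larson \cite{8} directly: a positive invertible $S$ admits left triangular factorization with respect to a multiplicity one nest $N$ \emph{iff} $S^{1/2}N$ has multiplicity one. Granting this equivalence, the proposition is immediate from Theorem \ref{Theorem 4.1.}: since that theorem shows $S$ does not factor, the nest $S^{1/2}N$ cannot have multiplicity one, which is precisely the assertion. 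This simultaneously furnishes the promised concrete counterexample to Ringrose's hypothesis, since $N$ itself has multiplicity one while $S^{1/2}N$ does not.
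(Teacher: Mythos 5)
Your proposal is correct and follows essentially the same route as the paper: the paper simply cites the known equivalence (from Davidson's \emph{Nest Algebras} \cite{3}, p.~169, rather than Larson \cite{8}) that a positive definite invertible operator admits factorization if and only if its square root preserves the multiplicity of $N$, and then applies Theorem \ref{Theorem 4.1.}. Your heuristic sketch of the forward implication via $S^{1/2}=S_{-}U$ is not needed (and would require a little care, since $UN$ need not equal $N$), but your ``cleanest route'' of citing the equivalence directly is exactly what the paper does.
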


 \begin{proof}
We use the well-known result (see \cite{3}, p. 169):\\
\emph{The following assertions are equivalent:}\\
\emph{1. The  positive definite, invertible operator T admits factorization.\\
2. $T^{1/2}$ preserves  the multiplicity of $N$.}\\
We stress that in our case the set $N=Q_{\xi}L^{2}(0,\infty)$ is fixed.)
The operator $S$  does not admit the factorization. Therefore,
the  set $S^{1/2}N$ fails to have  multiplicity $1$. The proposition is proved.
\end{proof}
Next, consider the operator
\begin{equation}\label{5.2}    Vf=\int_{0}^{x}e^{-(x+y)}f(y)dy, \quad f(x){\in}L^{2}(0,\infty).\end{equation}
An operator is said to be \emph{hyperintransitive} if its lattice of invariant
subspaces contains a multiplicity one nest. Note that the lattice of invariant
subspaces of the operator  $V$ coincides with $N$, see \cite{9} and \cite{22} (Ch. 11, Theorem 150). Hence we  deduce the answer
to Kadison-Singer \cite{5} and to Gohberg-Krein \cite{4} question.

\begin{cor} \label{Corollary 5.1.}
{The operator $W=S^{1/2}VS^{-1/2}$  is a non-hyperintransitive
compact operator.}
\end{cor}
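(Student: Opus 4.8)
The plan is to deduce Corollary 5.1 directly from Proposition 5.1 together with the structural description of the nest $N$ and the operator $V$. First I would recall that, by the cited result (\cite{9}, \cite{22}), the lattice $\mathrm{Lat}\,V$ of invariant subspaces of the Volterra-type operator $V$ in \eqref{5.2} coincides with the nest $N=\{Q_\xi L^2(0,\infty)\}$, which is a multiplicity one nest; hence $V$ itself is hyperintransitive. Conjugating by the bounded invertible operator $S^{1/2}$ transports invariant subspaces: a subspace $\mathcal{M}$ is invariant for $V$ if and only if $S^{1/2}\mathcal{M}$ is invariant for $W=S^{1/2}VS^{-1/2}$. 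Therefore $\mathrm{Lat}\,W=S^{1/2}(\mathrm{Lat}\,V)$, and in particular the only nests contained in $\mathrm{Lat}\,W$ that arise as images of nests in $\mathrm{Lat}\,V$ are of the form $S^{1/2}\mathcal{N}_0$ with $\mathcal{N}_0\subseteq N$.

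Next I would argue that if $W$ were hyperintransitive, its invariant subspace lattice would contain some multiplicity one nest $\widetilde N$. Pulling back by $S^{-1/2}$, the nest $S^{-1/2}\widetilde N$ would be contained in $\mathrm{Lat}\,V=N$. Since $N$ is itself a maximal nest (it is the lattice of a single operator with one-dimensional "spectral multiplicity", and every subnest of a maximal nest sitting cofinally is essentially all of it — more precisely, because $N$ is a maximal chain of subspaces of $L^2(0,\infty)$, any nest contained in it is a subchain), the only way $S^{-1/2}\widetilde N$ can be an honest nest contained in $N$ while $\widetilde N$ is a full (continuous) nest is $S^{-1/2}\widetilde N=N$, i.e.\ $\widetilde N=S^{1/2}N$. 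But then $S^{1/2}N$ would have multiplicity one, contradicting Proposition 5.1. Hence $W$ is non-hyperintransitive. I would phrase this carefully: the key point is that $N$ is a maximal nest, so it admits no strictly larger nest and any subnest rich enough to be the image of a multiplicity one nest under an invertible operator must be $N$ itself; then multiplicity is transported by $S^{1/2}$ exactly as in the Ringrose-type equivalence used in the proof of Proposition 5.1.

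Finally I would establish compactness of $W$. Since $V$ in \eqref{5.2} has kernel $e^{-(x+y)}\chi_{\{y<x\}}$, and $\int_0^\infty\int_0^\infty e^{-2(x+y)}\,dy\,dx<\infty$, the operator $V$ is Hilbert–Schmidt, hence compact. Compactness is preserved under similarity by the bounded operators $S^{1/2}$ and $S^{-1/2}$ (both bounded because $S$ is positive definite, bounded and invertible, by Proposition 2.1 and \eqref{4.11}), so $W=S^{1/2}VS^{-1/2}$ is compact. Combining the two parts gives the corollary.

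The main obstacle I anticipate is making the nest-theoretic step fully rigorous: one must be careful that "hyperintransitive" requires only that \emph{some} multiplicity one nest sit inside $\mathrm{Lat}\,W$, not that $\mathrm{Lat}\,W$ equal a nest, so the argument genuinely needs the maximality of $N$ (equivalently, that $V$ is \emph{unicellular} with lattice exactly $N$) to force any candidate nest back onto $N$. Once that is granted, the transport of the multiplicity-one property under $S^{1/2}$ is exactly the equivalence already invoked for Proposition 5.1, and the compactness is routine.
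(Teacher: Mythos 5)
Your overall route is the same as the paper's: the paper's entire proof is the one-line observation that the invariant subspace lattice of $W$ coincides with $S^{1/2}N$, after which Proposition 5.1 is invoked; your similarity argument for $\mathrm{Lat}\,W=S^{1/2}(\mathrm{Lat}\,V)=S^{1/2}N$ and your Hilbert--Schmidt computation for compactness (the paper does not even address compactness, though it is routine) are both correct.

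There is, however, a genuine gap in the one step you rightly identified as delicate: ruling out that $\mathrm{Lat}\,W=S^{1/2}N$ contains some \emph{other} multiplicity one nest $\widetilde N$. Your justification --- ``because $N$ is a maximal chain, the only way $S^{-1/2}\widetilde N$ can be a nest contained in $N$ while $\widetilde N$ is a full (continuous) nest is $S^{-1/2}\widetilde N=N$'' --- does not work as stated. Maximality of $N$ only says that no chain \emph{properly contains} $N$; it says nothing about proper subchains, and $N$ has plenty of proper subnests (e.g.\ $\{0,\,Q_1L^2,\,L^2(0,\infty)\}$). Moreover you have no right to assume $\widetilde N$ is continuous, and you cannot transport ``multiplicity one'' from $\widetilde N$ to $S^{-1/2}\widetilde N$, since the failure of similarity to preserve multiplicity is precisely the phenomenon under discussion. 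Two correct ways to close the gap: (i) observe that passing to a subnest enlarges the nest algebra and hence the diagonal, so for any nest $\widetilde N\subseteq S^{1/2}N$ one has $D_{\widetilde N}\supseteq D_{S^{1/2}N}$; since $D_{S^{1/2}N}$ is non-abelian by Proposition 5.1, so is $D_{\widetilde N}$, and no subnest of $S^{1/2}N$ has multiplicity one; or (ii) note that a multiplicity one nest must itself be a \emph{maximal} chain (any gap of dimension $\ge 2$ puts a full matrix block into the diagonal), so the maximality you need is that of the hypothetical $\widetilde N$, not of $N$; a maximal chain contained in the chain $S^{1/2}N$ must equal $S^{1/2}N$, and then Proposition 5.1 gives the contradiction. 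With either repair your argument is complete and coincides in substance with the paper's.
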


Indeed, the lattice of the invariant subspaces of
 the operator $W$ coincides with $S^{1/2}N.$

\begin{rem}\label{Remark 5.1.}
The existence parts of Theorem \ref{Theorem 4.1.}, Proposition \ref{Proposition 5.1.}, and 
Corollary \ref{Corollary 5.1.}
 are proved by
D.R. Larson \cite{8}.
\end{rem}

\end{document}